\setlist[enumerate,1]{font=\upshape}
\definecolor{todo-background-color}{gray}{0.95}
    \def\@settitle{%
      \vspace*{-10pt}
      \begin{flushleft}%
        \LARGE\bfseries
        \strut\@title\strut
      \end{flushleft}%
    }
    \def\@setauthors{%
      \begingroup
      \def\thanks{\protect\thanks@warning}%
      \trivlist
      \raggedright
      \large \@topsep27\p@\relax
      \advance\@topsep by -\baselineskip
    \item\relax
      \author@andify\authors
      \def\\{\protect\linebreak}%
      \authors
      \ifx\@empty\contribs
      \else
      ,\penalty-3 \space \@setcontribs
      \@closetoccontribs
      \fi
      \normalfont
      \endtrivlist
      \endgroup
    }
    \def\@setaddresses{\par
      \nobreak \begingroup
      \small\raggedright
      \def\author##1{\nobreak\addvspace\smallskipamount}%
      \def\\{\unskip, \ignorespaces}%
      \interlinepenalty\@M
      \def\address##1##2{\begingroup
        \par\addvspace\bigskipamount\noindent
        \@ifnotempty{##1}{(\ignorespaces##1\unskip) }%
        {\ignorespaces##2}\par\endgroup}%
      \def\curraddr##1##2{\begingroup
        \@ifnotempty{##2}{\nobreak\noindent\curraddrname
          \@ifnotempty{##1}{, \ignorespaces##1\unskip}\/:\space
          ##2\par}\endgroup}%
      \def\email##1##2{\begingroup
        \@ifnotempty{##2}{\nobreak\noindent E-mail address%
          \@ifnotempty{##1}{, \ignorespaces##1\unskip}\/:\space
          \ttfamily##2\par}\endgroup}%
      \def\urladdr##1##2{\begingroup
        \def~{\char`\~}%
        \@ifnotempty{##2}{\nobreak\noindent\urladdrname
          \@ifnotempty{##1}{, \ignorespaces##1\unskip}\/:\space
          \ttfamily##2\par}\endgroup}%
      \addresses
      \endgroup
      \global\let\addresses=\@empty
    }
    \def\@setabstracta{%
      \ifvoid\abstractbox
      \else
      \skip@17pt \advance\skip@-\lastskip
      \advance\skip@-\baselineskip \vskip\skip@
      \box\abstractbox
      \prevdepth\z@ 
      \vskip-28pt
      \fi
    }
    \renewenvironment{abstract}{%
      \ifx\maketitle\relax
      \ClassWarning{\@classname}{Abstract should precede
        \protect\maketitle\space in AMS document classes; reported}%
      \fi
      \global\setbox\abstractbox=\vtop \bgroup
      \normalfont\small
      \list{}{\labelwidth\z@
        \leftmargin0pc \rightmargin\leftmargin
        \listparindent\normalparindent \itemindent\z@
        \parsep\z@ \@plus\p@
        
      }%
    \item[\hskip\labelsep\bfseries\abstractname.]%
    }{%
      \endlist\egroup
      \ifx\@setabstract\relax \@setabstracta \fi
    }
    \def\ps@headings{\ps@empty
      \def\@evenhead{%
        \setTrue{runhead}%
        \normalfont\scriptsize
        \rlap{\thepage}\hfill
        \def\thanks{\protect\thanks@warning}%
        \leftmark{}{}}%
      \def\@oddhead{%
        \setTrue{runhead}%
        \normalfont\scriptsize
        \def\thanks{\protect\thanks@warning}%
        \rightmark{}{}\hfill \llap{\thepage}}%
      \let\@mkboth\markboth
    }\ps@headings
    \def\section{\@startsection{section}{1}%
      \z@{-1.4\linespacing\@plus-.5\linespacing}{.8\linespacing}%
      {\normalfont\bfseries\Large}}
    \def\subsection{\@startsection{subsection}{2}%
      \z@{-.8\linespacing\@plus-.3\linespacing}{.5\linespacing\@plus.2\linespacing}%
      {\normalfont\bfseries\large}}
    \def\subsubsection{\@startsection{subsubsection}{3}%
      \z@{.7\linespacing\@plus.2\linespacing}{-1.5ex}%
      {\normalfont\itshape}}
    \def\paragraph{\@startsection{paragraph}{4}%
      \z@{.7\linespacing\@plus.2\linespacing}{-1.5ex}%
      {\normalfont\itshape}}
    \def\@secnumfont{\bfseries}
    \renewcommand\contentsnamefont{\bfseries}
    \def\@starttoc#1#2{\begingroup
      \setTrue{#1}%
      \par\removelastskip\vskip\z@skip
      \@startsection{}\@M\z@{\linespacing\@plus\linespacing}%
      {.5\linespacing}{
        \contentsnamefont}{#2}%
      \ifx\contentsname#2%
      \else \addcontentsline{toc}{section}{#2}\fi
      \makeatletter
      \@input{\jobname.#1}%
      \if@filesw
      \@xp\newwrite\csname tf@#1\endcsname
      \immediate\@xp\openout\csname tf@#1\endcsname \jobname.#1\relax
      \fi
      \global\@nobreakfalse \endgroup
      \addvspace{32\p@\@plus14\p@}%
      \let\tableofcontents\rela\x
    }
    \def\contentsname{Contents}
    \def\l@section{\@tocline{2}{.5ex}{0mm}{5pc}{}}
    \def\l@subsection{\@tocline{2}{0pt}{2em}{5pc}{}}
\def\to{\mathchoice{\longrightarrow}{\rightarrow}{\rightarrow}{\rightarrow}}
\newcommand{\shortxra}[2][]{\ext@arrow 0359\rightarrowfill@{#1}{#2}}
\def\longrightarrowfill@{\arrowfill@\relbar\relbar\longrightarrow}
\newcommand{\longxra}[2][]{\ext@arrow 0359\longrightarrowfill@{#1}{#2}}
\renewcommand{\xrightarrow}[2][]{\mathchoice{\longxra[#1]{#2}}%
  {\shortxra[#1]{#2}}{\shortxra[#1]{#2}}{\shortxra[#1]{#2}}}
\def\addtagsub#1{\let\oldtf=\tagform@\def\tagform@##1{\oldtf{##1}\hbox{$_{#1}$}}}
\def\Nopagebreak{\@nobreaktrue\nopagebreak}
\newtheoremstyle{theorem-giventitle}
        {}{}              
        {\itshape}                      
        {}                              
        {\bfseries}                     
        {.}                             
        {\thm@headsep}                             
        {\thmnote{\bfseries#3}}
\newtheoremstyle{theorem-givenlabel}
        {}{}              
        {\itshape}                      
        {}                              
        {\bfseries}                     
        {.}                             
        {\thm@headsep}                             
        {\thmname{#1}~\thmnumber{#3}\setcurrentlabel{#3}}
\newtheoremstyle{definition-giventitle}
        {}{}              
        {}                      
        {}                              
        {\bfseries}                     
        {.}                             
        {\thm@headsep}                             
        {\thmnote{\bfseries#3}}
\def\setcurrentlabel#1{\gdef\@currentlabel{#1}}
\newtheorem{theorem}{Theorem}[section]
\newtheorem{theoremalpha}{Theorem}
\newtheorem{corollary}[theorem]{Corollary}
\newtheorem{lemma}[theorem]{Lemma}
\newtheorem{conjecturealpha}[theoremalpha]{Conjecture}
\theoremstyle{definition}
\newtheorem{definition}[theorem]{Definition}
\newtheorem*{case2'}{Case 2$'$}
\theoremstyle{theorem-giventitle}
\newtheorem{theorem-named}{}
\theoremstyle{theorem-givenlabel}
\newtheorem{theorem-labeled}{Theorem}
\theoremstyle{definition-giventitle}
\newtheorem{definition-named}{}
\newtheorem{conjecture-named}{}
\newtheorem{case-named}{}
\numberwithin{equation}{section}
\def\Z{\mathbb{Z}}
\def\Q{\mathbb{Q}}
\def\cR{\mathcal{R}}
\def\tilde{\widetilde}
\DeclareMathOperator\Ker{Ker}
\def\Im{\operatorname{Im}}
\DeclareMathOperator\sign{sign}
\def\rank{\operatorname{rank}}
\def\NN{\mathcal{N}}
\def\cC{\mathcal{C}}
\def\cF{\mathcal{F}}
\def\cK{\mathcal{K}}
\def\cA{\mathcal{A}}
\def\K{\mathbb{K}}
\begin{document}

\title[Primary decomposition]{Primary decomposition of knot concordance and von Neumann rho-invariants}

\author{Min Hoon Kim}
\address{
  Department of Mathematics\\
  POSTECH \\
  Pohang Gyeongbuk 37673\\
  Republic of Korea
}
\email{kminhoon@gmail.com}

\author{Se-Goo Kim}
\address{Department of Mathematics and Research Institute for Basic Sciences\\
Kyung Hee University\\
Seoul 02447\\
Republic of Korea
}
\email{sgkim@khu.ac.kr}

\author{Taehee Kim}
\address{Department of Mathematics\\
  Konkuk University\\
  Seoul 05029\\
  Republic of Korea
}
\email{tkim@konkuk.ac.kr}

\thanks{The second named author was supported by the Basic Science Research Program through the National Research Foundation of Korea (NRF) funded by the Ministry of Education (NRF-2018R1D1A1B07047860). The last named author was supported by Basic Science Research Program through the National Research Foundation of Korea(NRF) funded by the Ministry of Education (no.2018R1D1A1B07048361).}

\def\subjclassname{\textup{2010} Mathematics Subject Classification}
\expandafter\let\csname subjclassname@1991\endcsname=\subjclassname
\expandafter\let\csname subjclassname@2000\endcsname=\subjclassname
\subjclass{%
  57N13, 
  57M27, 
  57N70, 
  57M25
}

\begin{abstract} We address the primary decomposition of the knot concordance group in terms of the solvable filtration and higher-order von Neumann $\rho$-invariants by Cochran, Orr, and Teichner. We show that for a nonnegative integer $n$, if the connected sum of two $n$-solvable knots with coprime Alexander polynomials is slice, then each of the knots has vanishing von Neumann $\rho$-invariants of order $n$. This gives positive evidence for the conjecture that nonslice knots with coprime Alexander polynomials are not concordant. As an application, we show that if $K$ is one of Cochran-Orr-Teichner's knots which are the first examples of nonslice knots with vanishing Casson-Gordon invariants, then $K$ is not concordant to any knot with Alexander polynomial coprime to that of $K$. 

\end{abstract}

\maketitle

\section{Introduction}
Two knots $K$ and $J$ are \emph{concordant} if there exists a proper and locally flat embedding of an annulus into $S^3\times [0,1]$ which gives $K\times \{1\} \sqcup -J\times \{0\}$ on the boundary. A knot is \emph{slice} if it is concordant to the unknot. Concordance is an equivalence relation, and the concordance classes form an abelian group $\cC$, which is called \emph{the knot concordance group}, under connected sum. Although the group $\cC$ is not classified yet, its algebraic analogue, the algebraic concordance group, was classified by Levine \cite{Levine:1969-1, Levine:1969-2};  the algebraic concordance group consists of equivalence classes of Seifert forms, and Levine classified it using the primary decomposition of Seifert forms along Alexander polynomials. Therefore, it is a natural idea to try to classify or find structures of the knot concordance group using a similar primary decomposition along Alexander polynomials. For details and related discussions on primary decomposition we refer the reader to \cite{Cha:2019-1}. 

In this paper we address a conjecture which would play a key role in the primary decomposition of the knot concordance group:

\begin{conjecturealpha}\label{conjecture:topological-splitting}
	Let $K$ and $J$ be nonslice knots. If $K$ and $J$ have coprime Alexander polynomials, then $K$ and $J$ are not concordant. 
\end{conjecturealpha}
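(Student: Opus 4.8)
The plan is to exploit the equivalence that $K$ and $J$ are concordant if and only if $K\csum(-J)$ is slice, where $-J$ denotes the concordance inverse of $J$. Since the Alexander polynomial is insensitive to orientation and mirroring, one has $\Delta_{-J}\doteq\Delta_J$ (equality up to units in $\Q[t^{\pm1}]$), so the coprimality hypothesis persists for the pair $(K,-J)$, and the Alexander polynomial of the connected sum factors as $\Delta_{K\csum(-J)}\doteq\Delta_K\Delta_J$ into coprime pieces. The goal is then a \emph{splitting principle}: sliceness of a knot whose Alexander polynomial factors into coprime parts ought to force the ``$K$-part'' to be slice on its own, contradicting the assumption that $K$ is nonslice. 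This mirrors Levine's algebraic theory, where the Seifert form, and hence the algebraic concordance class, splits as an orthogonal sum along the coprime factors of the Alexander polynomial; the content of the conjecture is that this algebraic splitting should be realized geometrically.

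My first concrete step would be to install the splitting at the level of the rational Alexander module. Writing $M_X$ for the zero-framed surgery manifold of a knot $X$, the module $H_1(M_{K\csum(-J)};\Q[t^{\pm1}])$ is the direct sum of $H_1(M_K;\Q[t^{\pm1}])$ and $H_1(M_{-J};\Q[t^{\pm1}])$, and because $\Delta_K$ and $\Delta_J$ are coprime these two summands have disjoint support in $\operatorname{Spec}\Q[t^{\pm1}]$. I would then try to propagate this first-order splitting through the Cochran--Orr--Teichner solvable filtration $\{\cF_n\}$. A slice knot lies in $\cF_n$ for every $n$, so $K\csum(-J)\in\cF_n$ for all $n$; inducting on $n$, one would attempt to show that the higher-order Alexander modules and the associated representations to poly-torsion-free-abelian (PTFA) groups also decompose into a $K$-part and a $J$-part whose supports remain disjoint. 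The natural tool is a localized filtration in the sense of Cha: localizing at the factor $\Delta_K$ renders $\Delta_J$ a unit and so makes the $-J$ summand invisible, so that $K\csum(-J)$ becomes indistinguishable from $K$ after localization. The order-$n$ obstruction, a von Neumann $\rhot$-invariant, would then split accordingly, and the vanishing forced by sliceness of the whole would pin the $K$-contribution to zero at every order.

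The main obstacle is the final passage from \emph{vanishing of all finite-order obstructions} to \emph{actual sliceness}. Even if the splitting is carried out perfectly at every stage, the conclusion one obtains is that $K$ lies in $\bigcap_n\cF_n$ (or in the localized analogue), and it is a well-known open problem whether this intersection is strictly larger than the subgroup of slice knots; the solvable filtration is not known to detect sliceness. Thus the filtration method, which is the source of the present paper's positive evidence, can at best prove the finite-level ``shadow'' in which the order-$n$ $\rhot$-invariants vanish, rather than the full conjecture. To close the gap one would need a genuinely geometric separation: given a slice disk exterior for $K\csum(-J)$, cut it along a properly embedded 3-manifold separating the $K$- and $J$-homology of the infinite cyclic cover and reassemble the pieces into a slice disk for $K$, using coprimality to control the $\Q[t^{\pm1}]$-homology. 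The difficulty is intrinsic: coprimality is a first-order, abelian condition, whereas sliceness is governed by the entire tower of higher-order invariants, and there is no known mechanism guaranteeing that a first-order separation survives at the geometric level. This tension is exactly why the statement is posed as a conjecture.
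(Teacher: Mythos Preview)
The statement you were asked to prove is Conjecture~A, and the paper does \emph{not} prove it; it is stated as an open conjecture, and the paper's contribution is the partial result Theorem~\ref{theorem:main}, which shows only that the order-$n$ von Neumann $\rho$-invariants of $K$ vanish under the given hypotheses. You have correctly diagnosed this: your proposal is not a proof but an honest account of the natural strategy and of the obstruction to completing it, namely that vanishing of all finite-order $\rho$-invariants (equivalently, membership in $\bigcap_n \cF_n$) is not known to imply sliceness.

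Your outlined approach---split the first-order Alexander module along the coprime factorization, then propagate the splitting up the Cochran--Orr--Teichner tower so that the $J$-piece becomes invisible to the $\Gamma_n$-representations used to compute $\rho$-invariants---is exactly the mechanism the paper uses to prove Theorem~\ref{theorem:main}. The paper implements this concretely: it builds a $4$-manifold $X = W \cup_{M(L)} C \cup_{M(J)} V$ with $\partial X = M(K)$, and the key Lemma~\ref{lemma:trivial-on-V} shows (via the primary decomposition of $H_1(X;\Q[t^{\pm1}])$ that you anticipated) that the extension of $\phi_{x_0}$ can be arranged to be abelian on $\pi_1(V)$, which forces the $V$-contribution to $\bar\sigma_{\Gamma_n}$ to vanish. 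Your invocation of Cha's localized filtration is a slightly different packaging of the same idea. So your analysis of both the strategy and the gap is accurate and aligned with the paper; there is simply nothing further to prove here, because the conjecture remains open.
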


Put differently, it is conjectured that if the connected sum of two knots with coprime Alexander polynomials is slice, then both knots are slice. We remark that the smooth concordance version of Conjecture~\ref{conjecture:topological-splitting} requires an additional assumption that $K$ and $J$ are not smoothly concordant to a knot with trivial Alexander polynomial \cite[Appendix]{Cha:2019-1}. Note that a knot with trivial Alexander polynomial is (topologically) slice \cite{Freedman:1982-2,Freedman-Quinn:1990-1}. 

There has been positive evidence for Conjecture~\ref{conjecture:topological-splitting}. It was shown that if the connected sum of two knots with coprime Alexander polynomials is slice, then the Casson-Gordon invariant and the metabelian von Neumann $\rho$-invariant vanish for both knots \cite{Kim:2005-2,Kim-Kim:2008-1}, and the higher-order von Neumann $\rho$-invariant vanish for both knots under a certain splitting condition for higher-order Blanchfield linking forms \cite{Kim-Kim:2014-1}. 

In this paper, we give another positive evidence for Conjecture~\ref{conjecture:topological-splitting}. In 2003, Cochran, Orr, and Teichner \cite{Cochran-Orr-Teichner:2003-1} introduced \emph{the solvable filtration of $\cC$},
\[
0\subset \cdots \subset \cF_{n.5}\subset \cF_n \subset \cdots \subset \cF_{0.5}\subset \cF_0,\subset \cC ,
\]
and \emph{the von Neumann $\rho$-invariants of order $n$} for each integer $n\ge 0$ as obstructions for a knot in $\cF_n$ to being in $\cF_{n.5}$ \cite[Theorem~4.6]{Cochran-Orr-Teichner:2003-1}. For each half-integer $h\ge 0$, a knot in $\cF_h$ is called \emph{$h$-solvable}, and it is known that each $\cF_h$ is a subgroup of $\cC$. See Definitions~\ref{definition:F_n} and \ref{definition:vanishing-rho-invariant} for the definitions of $\cF_n$ and $\rho$-invariants of order $n$, respectively. We remark that a knot is $0.5$-solvable if and only if it is algebraically slice, and if a knot is $1.5$-solvable, then the Casson-Gordon invariant and metabelian $\rho$-invariant vanish for the knot. We also note that it is still unknown whether or not the transfinite intersection $\bigcap_n \cF_n$ is trivial. In \cite{Cochran-Orr-Teichner:2003-1, Cochran-Orr-Teichner:2004-1}, Cochran, Orr, and Teichner gave the first examples of nonslice knots with vanishing Casson-Gordon invariants by showing that there is a 2-solvable knot which does not have vanishing $\rho$-invariants of order 2, hence not 2.5-solvable. 

In this paper, we prove the following theorem.

\begin{theorem}\label{theorem:main}
Let $n\ge 0$ be an integer. Suppose two knots $K$ and $J$ have coprime Alexander polynomials. If $K$ is $n$-solvable and $K\# J$ is $n.5$-solvable (e.g. slice), then $K$ has vanishing $\rho$-invariants of order $n$. 
\end{theorem}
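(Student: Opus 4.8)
The plan is to fix an arbitrary $n$-solution $V$ for $K$ and work with the induced coefficient system $\phi\colon\pi_1(M_K)\to\Gamma$, where $\Gamma=\pi_1(V)/\pi_1(V)^{(n+1)}_r$ is the quotient by the $(n+1)$-st rational derived subgroup, so that $\Gamma$ is poly-torsion-free-abelian with $\Gamma^{(n+1)}_r=1$; showing $\rho^{(2)}(M_K,\phi)=0$ for every such $V$ is what Definition~\ref{definition:vanishing-rho-invariant} asks for. The basic mechanism is additivity. Recall the standard connected-sum cobordism $C$ from $M_K\sqcup M_J$ to $M_{K\# J}$, whose first homology is $\Z$ generated by the common meridian and whose boundary inclusions realize the splitting of Alexander modules $A_{K\# J}\cong A_K\oplus A_J$. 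Since the abelianization of $\Gamma$ is $\Z$ and the three meridians are identified in $C$, the map $\phi$ extends to $\Phi\colon\pi_1(C)\to\Gamma$; write $\psi$ and $\psi_J$ for its restrictions to $M_{K\# J}$ and $M_J$. Because the signature defects of $C$ vanish, the Cheeger--Gromov $L^2$-signature cobordism formula yields the additivity relation $\rho^{(2)}(M_{K\# J},\psi)=\rho^{(2)}(M_K,\phi)+\rho^{(2)}(M_J,\psi_J)$.

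Next I would kill the left-hand term using the hypothesis on $K\# J$. Fix an $n.5$-solution $W$ for $K\# J$. Provided $\psi$ extends over $W$ and $\Gamma^{(n+1)}_r=1$, the argument of \cite[Theorem~4.6]{Cochran-Orr-Teichner:2003-1}---the metabolizer supplied by the $n.5$-solution is compatible with the $\Gamma$-coefficient intersection form of $W$---gives $\sigma^{(2)}(W,\psi)=\sign(W)$, hence $\rho^{(2)}(M_{K\# J},\psi)=0$. The additivity relation then collapses to $\rho^{(2)}(M_K,\phi)=-\rho^{(2)}(M_J,\psi_J)$, and everything reduces to showing that the $J$-term vanishes.

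To kill $\rho^{(2)}(M_J,\psi_J)$ I would use coprimality to show that $\psi_J$ factors through the abelianization $\pi_1(M_J)\twoheadrightarrow H_1(M_J)\cong\Z$, by a downward induction on derived length. At the first layer, $\Gamma^{(1)}_r/\Gamma^{(2)}_r$ is, as a $\Q[t^{\pm 1}]$-module, governed by the $\Delta_K$-torsion module $A_K$, whereas the image of $\pi_1(M_J)^{(1)}$ is a quotient of the $\Delta_J$-torsion module $A_J$; coprimality of $\Delta_K$ and $\Delta_J$ forces this map to be zero, so $\pi_1(M_J)^{(1)}$ lands in $\Gamma^{(2)}_r$, and one repeats at each higher layer. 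Consequently $\rho^{(2)}(M_J,\psi_J)=\rho^{(2)}(M_J,\Z)=\int_{S^1}\sigma_J(\omega)\,d\omega$, the integral Levine--Tristram signature. Finally, since $K\# J$ is in particular $0.5$-solvable it is algebraically slice, so $\sigma_K(\omega)+\sigma_J(\omega)=0$ for almost every $\omega$; as $\sigma_K$ and $\sigma_J$ jump only at roots of $\Delta_K$ and of $\Delta_J$ respectively and these loci are disjoint by coprimality, each signature function is almost everywhere constant, hence identically $0$, giving $\int_{S^1}\sigma_J=0$. This last computation is exactly the $n=0$ case of the theorem, which I would record first as the base case. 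Combining the three steps gives $\rho^{(2)}(M_K,\phi)=0$.

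The main obstacle is the higher-order coprimality step. Propagating the statement ``coprime ordinary Alexander polynomials'' up the derived series is delicate because the layers $\Gamma^{(i)}_r/\Gamma^{(i+1)}_r$ are modules over the noncommutative rings $\Q[\Gamma/\Gamma^{(i)}_r]$, so one must show that the higher-order Alexander modules along $M_J$ map trivially into the $K$-flavoured layers built from $V$---an analogue of disjoint support that does not follow formally from the degree-one case and is where I expect the real work to lie. A secondary technical point is the extension of $\psi$ over the $n.5$-solution $W$ that is needed to invoke \cite[Theorem~4.6]{Cochran-Orr-Teichner:2003-1}; this should follow from the injectivity of the maps induced on rational-derived quotients by $n$-solutions, but must be verified so that the coefficient system on $M_K$ under control is genuinely one of those appearing in the definition of order-$n$ $\rho$-invariants.
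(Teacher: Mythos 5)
Your proposal founders on two genuine gaps, and the second one you flag yourself without resolving it. First, the framing is off: Definition~\ref{definition:vanishing-rho-invariant} does not ask for vanishing of $\rho$ for the coefficient system induced by \emph{every} $n$-solution $V$ of $K$ with target $\pi_1(V)/\pi_1(V)^{(n+1)}_r$; it asks for the \emph{existence} of a tower of self-annihilating submodules $P_0, P_1(x_0),\ldots$ of the higher-order Alexander modules such that the associated representations into the universal groups $\Gamma_i$ (built from the Blanchfield forms) have vanishing $\rho$. Quantifying over all $n$-solutions of $K$ is both unnecessary and too strong, and it is precisely what breaks your second step: a representation induced from an arbitrary $n$-solution $V$ of $K$ (extended over $C$) has no reason to extend over the $n.5$-solution $W$ of $K\# J$ --- the metabolizer $\Ker\{\cA_0\to H_1(V;\cR_0)\}$ determined by $V$ will in general differ from the one determined by $W\cup C$, and already at the first level a class $x_0$ in the former need not pair trivially against the latter, so $\phi_{x_0}$ does not extend. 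There is no ``injectivity of maps induced on rational-derived quotients by $n$-solutions'' that rescues this; were your step valid, every $n$-solvable knot would have vanishing $\rho$-invariants of order $n$ relative to each of its solutions, contradicting the Cochran--Orr--Teichner examples. The actual proof inverts your logic: since $\cF_n$ is a group, $J$ is $n$-solvable; taking $V$ an $n$-solution for $J$ and $W$ an $n.5$-solution for $K\# J$, one forms $X:=W\cup_{M(L)}C\cup_{M(J)}V$, an $n$-solution for $K$, and defines the $P_i$ as kernels into $X$. Then every representation in sight extends over $X\supset W$ \emph{by construction}, and the desired vanishing becomes $\bar\sigma_{\Gamma_n}(X)=\bar\sigma_{\Gamma_n}(W)+\bar\sigma_{\Gamma_n}(C)+\bar\sigma_{\Gamma_n}(V)=0$, with the $W$- and $C$-terms dying for the reasons you correctly cite.

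Second, the ``higher-order coprimality'' step that you identify as the main obstacle is indeed where the content lies, and the paper's point is that one never has to propagate coprimality up the derived series at all. Coprimality is used exactly once, at the commutative level: $H_1(X;\Q[t^{\pm 1}])$ splits over the PID $\Q[t^{\pm 1}]$ as $A\oplus B$ with $A$ the sum of $p(t)$-primary parts for $p(t)\mid\Delta_K(t)$, the image of $\pi_1(M(J))$ lands in $B$ because its elements are $\Delta_J$-torsion, and composing with the projection onto $A\rtimes\Z$ (then regluing over $\pi_1(X)\cong\pi_1(Z)\ast_{\pi_1(M(J))}\pi_1(V)$) yields an extension whose restriction to $V$ kills $\pi_1(V)^{(1)}$ at the $\Gamma_1$-level (Lemma~\ref{lemma:trivial-on-V}). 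The ascent to order $n$ is then pure group theory, not module theory: the kernel $G$ of the projection $\Gamma_n\to\Gamma_1$ is an iterated semidirect product of the abelian groups $\cK_i/\cR_i$ and satisfies $G^{(n-1)}=\{e\}$, so $\phi(\pi_1(V)^{(n)})=\phi\bigl((\pi_1(V)^{(1)})^{(n-1)}\bigr)\subset G^{(n-1)}=\{e\}$, i.e.\ $\phi$ on $\pi_1(V)$ factors through $\pi_1(V)/\pi_1(V)^{(n)}$; since $V$ is an $n$-solution, \cite[Theorem~4.2]{Cochran-Orr-Teichner:2003-1} then gives $\bar\sigma_{\Gamma_n}(V)=0$. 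Note also that in this factoring one does not (and cannot) conclude that the representation on $\pi_1(M(J))$ is the bare abelianization at all orders, only enough solvability degeneracy to kill the signature defect --- your plan to reduce $\rho^{(2)}(M_J,\psi_J)$ literally to the Levine--Tristram integral is stronger than needed and, as you suspected, not obtainable layer by layer over the noncommutative rings $\cR_i$. Your base case $n=0$ and the vanishing of the $W$- and $C$-contributions do match the paper's argument.
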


We note that Theorem~\ref{theorem:main} extends Theorem~1.1 in \cite{Kim-Kim:2014-1} by removing the extra condition that the higher-order Blanchfield forms and self-annihilating submodules split. 

The aforementioned knots of Cochran, Orr, and Teichner, which have Alexander polynomial $(t^2-3t+1)^2$, are 2-solvable and do not have vanishing $\rho$-invariants of order 2 \cite{Cochran-Orr-Teichner:2003-1, Cochran-Orr-Teichner:2004-1}. Therefore, we obtain the following corollary. 

\begin{corollary}\label{corollary:COT-knot-coprime}
If $K$ is one of Cochran-Orr-Teichner's knots in \cite{Cochran-Orr-Teichner:2003-1,Cochran-Orr-Teichner:2004-1}, then $K$ is not concordant to any knot with Alexander polynomial coprime to $\Delta_K(t)=(t^2-3t+1)^2$.
\end{corollary}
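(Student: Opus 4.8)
The plan is to combine the additivity of the von Neumann $\rho$-invariant under connected sum with the solvability hypotheses, using coprimality of the Alexander polynomials to neutralize the contribution of $J$. Throughout, $M_L$ denotes the zero-surgery manifold of a knot $L$. Two preliminary observations set the stage. First, since $n$-solvability is preserved under inverses and connected sums and $\cF_{n.5}\subset\cF_n$, the knot $J=(-K)\#(K\# J)$ is itself $n$-solvable; thus $K$ and $J$ bound $n$-solutions $W_K$ and $W_J$, while $K\# J$ bounds an $n.5$-solution $W$. Second, $K\# J$ is $0.5$-solvable, hence algebraically slice, and Levine's primary decomposition of the algebraic concordance group along the coprime polynomials $\Delta_K$ and $\Delta_J$ forces both $K$ and $J$ to be algebraically slice; in particular the order-$0$ invariant $\int_0^1\sigma_J(e^{2\pi i\theta})\,d\theta$ vanishes, and the classical Blanchfield form of $M_{K\# J}$ splits orthogonally into summands supported on $\Delta_K$ and $\Delta_J$.

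Next I would assemble a single $4$-manifold bounded by $M_K$. Using the standard connected-sum cobordism $C$ with $\partial C=M_K\sqcup M_J\sqcup -M_{K\# J}$, whose ordinary and $L^{(2)}$-signatures both vanish, set
\[
V:=W\cup_{M_{K\# J}}C\cup_{M_J}(-W_J),\qquad \partial V=M_K.
\]
Given an arbitrary order-$\le n$ coefficient system $\phi\colon\pi_1(M_K)\to\Gamma$ into a PTFA group with $\Gamma^{(n+1)}=1$, the goal is to show $\rho(M_K,\phi)=0$. Working with the rational derived series so that $n$-solutions induce monomorphisms on the relevant derived quotients, $\phi$ extends over $\pi_1(V)$, and then
\[
\rho(M_K,\phi)=\sigma^{(2)}_\Gamma(V)-\sigma(V).
\]
By Novikov additivity of ($L^{(2)}$- and ordinary) signatures, the right-hand side splits as the sum of the signature defects of $W$, of $C$, and of $-W_J$.

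The first two defects vanish immediately: the defect of $C$ is zero because its signatures vanish, and the defect of the $n.5$-solution $W$ is zero by the Cochran--Orr--Teichner theorem applied to its boundary $M_{K\# J}$ with the (order-$\le n$) restricted coefficient system. Everything therefore reduces to showing that the defect contributed by $W_J$ vanishes. This is the main obstacle, and it is exactly here that coprimality must be propagated up the derived tower. Since $W_J$ is only an $n$-solution, its defect is controlled by Cochran--Orr--Teichner only for coefficient systems of order at most $n-1$; so it suffices to prove that the coefficient system restricted to $M_J$ is in fact abelian, equivalently that its defect is the already-vanishing order-$0$ invariant of $J$.

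At the first derived stage this is transparent, as the $J$-part of $\phi$ factors through the classical Blanchfield form of $J$, which is supported on $\Delta_J$ and orthogonal to the $K$-summand. The difficulty --- and the precise point at which Kim and Kim previously assumed that the higher-order Blanchfield forms and their self-annihilating submodules split --- is that the metabolizer cut out by the $n.5$-solution $W$ need not respect this orthogonal splitting at higher stages, so the $K$- and $J$-parts of $\phi$ cannot be separated naively. I expect the heart of the proof to be an inductive localization argument showing that, at each stage, the higher-order module decomposes along the coprime first-stage support, so that the representation fed into $W_J$ stays abelian and contributes nothing. Carrying out this stage-by-stage decomposition with no splitting hypothesis on the metabolizer is the genuinely new step and the crux of the whole argument.
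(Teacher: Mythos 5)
Your overall architecture is exactly that of the paper's proof of Theorem~\ref{theorem:main}, of which the corollary is a two-line consequence: the Cochran--Orr--Teichner knots are 2-solvable without vanishing $\rho$-invariants of order 2, so if such a knot $K$ were concordant to $J$ with $(\Delta_K,\Delta_J)=1$, then $K\#(-J)$ would be slice, hence $2.5$-solvable, and Theorem~\ref{theorem:main} would force vanishing --- contradiction. You instead re-derive the main theorem, assembling (in the paper's notation) $X=W\cup_{M(K\# J)}C\cup_{M(J)}V$ with $\partial X=M(K)$, applying Novikov additivity, and killing the defects of the $n.5$-solution $W$ and of $C$; all of that matches the paper. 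But your write-up stops precisely where the real work begins: you say you ``expect'' an inductive localization argument to control the representation on the $J$-side, and you yourself identify this as the crux. An acknowledged expectation is not a proof, so the proposal has a genuine gap at its decisive step --- the content of the paper's Lemma~\ref{lemma:trivial-on-V}.

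Moreover, the target you set for the missing step is misaimed in two ways. First, quantifiers: vanishing of $\rho$-invariants of order $n$ (Definition~\ref{definition:vanishing-rho-invariant}) is an existential statement about self-annihilating submodules $P_i$ and the specific tower representations $\phi_{x_0,\ldots,x_{n-1}}$ built from them; proving $\rho(M(K),\phi)=0$ for an \emph{arbitrary} PTFA coefficient system with $\Gamma^{(n+1)}=1$ is false even for slice knots, and arbitrary $\phi$ need not extend over $\pi_1(X)$ at all. One must take $P_0=\Ker\{H_1(M(K);\cR_0)\to H_1(X;\cR_0)\}$ and its higher analogues, with extendability supplied by \cite[Theorem~3.6]{Cochran-Orr-Teichner:2003-1} because $X$ is an $n$-solution. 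Second, abelianness of the restriction to $\pi_1(V)$ at every stage is both more than you need and more than coprimality delivers; in particular, your plan to decompose each \emph{higher-order} Alexander module along the coprime first-stage support is exactly the splitting that is not known to hold --- it is the hypothesis of \cite{Kim-Kim:2014-1} that Theorem~\ref{theorem:main} is designed to remove. The paper's device is different and one-shot: since $\Gamma_1^{(2)}=\{e\}$ and the abelian part of $\Gamma_1$ is a $\Q$-module, any stage-one extension factors through $H_1(X;\Q[t^{\pm 1}])\rtimes\Z$; decomposing $H_1(X;\Q[t^{\pm 1}])\cong A\oplus B$ with $A$ the sum of the $p(t)$-primary parts for $p(t)\mid\Delta_K(t)$ and postcomposing with the projection to $A\rtimes\Z$, coprimality forces the image of $\pi_1(M(J))$ into $B$, so the modified extension is the abelianization on $\pi_1(V)$ --- but only at the $\Gamma_1$ stage. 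Higher stages are then handled by pure group theory, not by module decompositions: the kernel $G$ of the projection $\Gamma_n\to\Gamma_1$ satisfies $G^{(n-1)}=\{e\}$, whence $\phi(\pi_1(V)^{(n)})=\{e\}$, i.e.\ $\phi|_{\pi_1(V)}$ factors through $\pi_1(V)/\pi_1(V)^{(n)}$ --- not abelian, but enough, since $V$ is an $n$-solution and \cite[Theorem~4.2]{Cochran-Orr-Teichner:2003-1} then gives $\bar{\sigma}_{\Gamma_n}(V)=0$. Without this lemma, or some substitute for it, your argument does not close.
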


The \emph{concordance genus} of a knot $K$ is the minimum genus among all knots concordant to $K$. Using Corollary~\ref{corollary:COT-knot-coprime}, we can reprove Theorem~1.2 in \cite{Kim-Kim:2014-1}, which gave the first examples of knots with vanishing Casson-Gordon invariants which have concordance genus greater than 1. 

\begin{corollary}[{\cite[Theorem~1.2]{Kim-Kim:2014-1}}]\label{corollary:CPT-knot-concordance-genus}
Cochran-Orr-Teichner's knots in \cite{Cochran-Orr-Teichner:2003-1,Cochran-Orr-Teichner:2004-1}, which are 2-solvable, have concordance genus 2.
\end{corollary}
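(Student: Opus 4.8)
The plan is to establish the two bounds $g_c(K)\le 2$ and $g_c(K)\ge 2$ separately, where $g_c$ denotes concordance genus. The upper bound is immediate: the Cochran--Orr--Teichner knots have Seifert genus $2$ (their Alexander polynomial $\Delta_K(t)=(t^2-3t+1)^2$ has degree $4$, and they are realized on a genus-$2$ Seifert surface), so $g_c(K)\le g(K)=2$. For the lower bound I would argue by contradiction, assuming $g_c(K)\le 1$. Then $K$ is concordant to a knot $J$ with Seifert genus $g(J)\le 1$, whence $\deg\Delta_J\le 2g(J)\le 2$. Writing $p(t)=t^2-3t+1$, which is irreducible over $\mathbb{Q}$ and self-reciprocal (so that $p(t^{-1})\doteq p(t)$), I would then split into two cases according to whether $p$ divides $\Delta_J$.

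In the first case $\Delta_J$ is coprime to $p$, and hence coprime to $\Delta_K=p^2$; this already covers the subcase $g(J)=0$, where $J$ is unknotted and $\Delta_J=1$. Here Corollary~\ref{corollary:COT-knot-coprime} asserts directly that $K$ is not concordant to $J$, contradicting the choice of $J$. In the remaining case $p\mid\Delta_J$, and since $\deg\Delta_J\le 2=\deg p$ and $\Delta_J(1)=\pm1$, this forces $\Delta_J\doteq p$. Now $K\#(-J)$ is slice, so by the Fox--Milnor condition its Alexander polynomial $\Delta_K\,\Delta_J\doteq p^3$ must factor as $f(t)f(t^{-1})$ for some $f\in\mathbb{Z}[t,t^{-1}]$. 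Working in the principal ideal domain $\mathbb{Q}[t,t^{-1}]$, in which $p$ is prime and $p(t^{-1})\doteq p(t)$, any such $f$ is a unit times a power $p^a$, so that $f(t)f(t^{-1})\doteq p^{2a}$ is an \emph{even} power of $p$; this cannot equal the odd power $p^3$. This contradiction rules out the second case and completes the proof that $g_c(K)\ge 2$.

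The one delicate point, and the step I would be most careful to state precisely, is this final parity obstruction. Its content is that $\Delta_K=p^2$ has even $p$-valuation, so a concordant genus-$1$ knot sharing the irreducible factor $p$ would force the $p$-valuation of $\Delta_K\Delta_J$ to become odd, which Fox--Milnor forbids. The two ingredients combine cleanly: Corollary~\ref{corollary:COT-knot-coprime} eliminates every genus-$\le 1$ candidate whose Alexander polynomial avoids $p$, while the Fox--Milnor parity argument eliminates the single surviving possibility $\Delta_J\doteq p$. Together they show that no knot of Seifert genus at most $1$ is concordant to $K$, and with the upper bound this yields $g_c(K)=2$.
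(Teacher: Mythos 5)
Your proof is correct, but your lower-bound argument takes a genuinely different route from the paper's. The paper makes no case division: assuming a genus-one knot $J$ is concordant to a Cochran--Orr--Teichner knot, it observes that $J$ is then itself 2-solvable (concordance invariance, since $\cF_2$ is a subgroup of $\cC$), hence algebraically slice, and invokes the classical fact that a genus-one algebraically slice knot has $\Delta_J(t)=(mt-(m+1))((m+1)t-m)$ for some integer $m\ge 1$. Such a polynomial has only rational roots, so it is automatically coprime to $(t^2-3t+1)^2$, and Corollary~\ref{corollary:COT-knot-coprime} finishes immediately---in other words, your Case 2 is vacuous on the paper's approach. You instead permit the possibility $\Delta_J\doteq t^2-3t+1$ and kill it with the Fox--Milnor parity obstruction, which is valid as you state it: since $p(t)=t^2-3t+1$ is irreducible over $\Q$ and self-reciprocal, the $p$-valuation of any product $f(t)f(t^{-1})$ in $\Q[t^{\pm 1}]$ is even, whereas $\Delta_{K\#(-J)}\doteq p^3$ has odd valuation. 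Comparing the two: the paper's argument is shorter and reuses machinery already in play (the solvable filtration and the genus-one algebraically-slice classification), while yours is more elementary and self-contained---it needs neither 2-solvability of the candidate knot nor the genus-one classification, only Fox--Milnor---and it would apply verbatim to any knot whose Alexander polynomial is an even power of an irreducible self-reciprocal quadratic. You also make explicit two points the paper leaves implicit: the upper bound $g_c(K)\le g(K)=2$ and the genus-zero case (which your coprime case absorbs via $\Delta_J=1$).
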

\begin{proof}
Suppose that a knot $K$ has genus 1 and is concordant to one of Cochran-Orr-Teichner's knots. Then $K$ is also 2-solvable, hence algebraically slice. Since $K$ has genus 1, it follows that $\Delta_K(t)=(mt-(m+1))((m+1)t-m)$ for some integer $m\ge 1$, which is coprime to $(t^2-3t+1)^2$. This contradicts Corollary~\ref{corollary:COT-knot-coprime}.
\end{proof}

In this paper, all manifolds are oriented, compact, and connected, and homology groups are understood with integer coefficients unless mentioned otherwise.

This paper is organized as follows. In Section~\ref{section:rho-invariants} we introduce the notions of $n$-solvable knots and $\rho$-invariants of order $n$. In Section~\ref{section:proof-of-main-theorem} we give a proof of Theorem~\ref{theorem:main}.


\section{The solvable filtration and the von Neumann $\rho$-nvariants of order $n$}\label{section:rho-invariants}
In this section, we review the results in \cite{Cochran-Orr-Teichner:2003-1} which will be needed for the proof of Theorem~\ref{theorem:main}.
\subsection{The solvable filtration}\label{subsection:solvable-filtration}
In \cite{Cochran-Orr-Teichner:2003-1}, Cochran, Orr, and Teichner introduced the \emph{solvable filtration} $\{\cF_n\}$ of the knot concordance group $\cC$, which is indexed by nonnegative half-integers, that is,
\[
0\subset \cdots\subset \cF_{n.5}\subset \cF_n\subset \cdots\subset \cF_{0.5}\subset \cF_0\subset \cC,
\]
where $\cF_n$ is the subgroup of $n$-solvable knots. The definition of an $n$-solvable knot follows. Throughout this paper $M(K)$ denotes the zero-framed surgery on a knot $K$ in $S^3$. For a group $G$, we let $G^{(0)}:=G$ and $G^{(n+1)}:=[G^{(n)},G^{(n)}]$ for $n\ge 0$.

\begin{definition}\label{definition:F_n}
 Let $n$ be a nonnegative integer. A closed 3-manifold is \emph{$n$-solvable via $W$} if there exists a spin 4-manifold $W$ with $\partial W=M$ which satisfies the following: let $\pi:=\pi_1(W)$ and $r:=\rank_\Q H_2(W)$.
\begin{enumerate}
	\item The homomorphism $H_1(M)\to H_1(W)$ induced from inclusion is an isomorphism.
	\item There exist elements $x_1,x_2,\ldots, x_r$ and $y_1,y_2,\ldots, y_r$ such that for the equivarient intersection form
	\[
	\lambda_n\colon H_2(W;\Z[\pi/\pi^{(n)}])\times H_2(W;\Z[\pi/\pi^{(n)}]) \to \Z[\pi/\pi^{(n)}],
	\]
	$\lambda_n(x_i,x_j)=0$ and $\lambda_n(x_i,y_j)=\delta_{ij}$ for all $i$ and $j$.
\end{enumerate}
The above 4-manifold $W$ is called an \emph{$n$-solution}.
In addition to (1) and (2), if there exist lifts $\tilde{x}_i$ of $x_i$ for $1\le i\le r$ in $H_2(W;\Z[\pi/\pi^{(n+1)}])$ such that $\lambda_{n+1}(\tilde{x}_i,\tilde{x}_j)=0$ for all $i$ and $j$, then $M$ is called \emph{$n.5$-solvable via $W$} and $W$ is called an \emph{$n.5$-solution}.  We say that a knot $K$ is \emph{$n$-solvable} (resp. \emph{$n.5$-solvable}) if $M(K)$ is $n$-solvable (resp. $n.5$-solvable).
\end{definition}

\subsection{The von Neumann $\rho$-invariant}\label{subsection:rho-invariant}
The von Neumann $\rho$-invariant was defined by Cheeger and Gromov \cite{Cheeger-Gromov:1985-1}, and introduced for the study on knot concordance by Cochran, Orr, and Teichner \cite{Cochran-Orr-Teichner:2003-1}. In this paper, we give a brief topological definition of the von Neumann $\rho$-invariant using $L^2$-signatures. For more details, we refer the reader to \cite{Cochran-Orr-Teichner:2003-1,Cochran-Teichner:2003-1,Cha-Orr:2009-01,Cha:2010-01}.

Let $M$ be a closed manifold, $G$ be a countable discreted group, and $\phi\colon \pi_1(M)\to G$ be a homomorphism. Suppose there exists a 4-manifold $W$ with $\partial W=M$ such that $\phi$ extends to $\pi_1(W)$. Then the von Neumann $\rho$-invariant of $(M,\phi)$ is the $L^2$-signature defect of $W$, that is, 
\[
\rho(M,\phi)=\bar{\sigma}_G(W):=\sign_G^{(2)}(W)-\sign(W)
\]
where $\sign(W)$ is the ordinary signature of $W$ and $\sign_G^{(2)}(W)$ is the $L^2$-signature of the intersection form
\[
H_2(W;\NN G)\times H_2(W;\NN G)\to \NN G
\]
where $\NN G$ is the group von Neumann algebra of $G$. 

Since every group embeds into an acyclic group and since $\rho(M,\phi)=\rho(M,i\circ \phi)$ for every injective homomorphism $i\colon G\hookrightarrow G'$, the von Neumann $\rho$-invariant $\rho(M,\phi)$ can be computed by assuming the existence of $W$ as above.

\subsection{The rationally universal solvable groups}\label{subsection:rationally-universal-solvable-group}
In this subsection we review the rationally universal solvable groups defined in \cite[Section~3]{Cochran-Orr-Teichner:2003-1} via which the sliceness obstructions in \cite[Theorem~4.6]{Cochran-Orr-Teichner:2003-1} are defined.

The \emph{0th rationally universal solvable group} is defined to be $\Gamma_0:=\Z$. Now suppose $\Gamma_{n-1}$ has been defined. Then, the \emph{$n$th rationally universal solvable group} $\Gamma_n$ is defined inductively as follows: let $\cR_{n-1}:=\Q\Gamma_{n-1}(\Q(\Gamma_{n-1}^{(1)})\setminus \{0\})^{-1}$ and $\cK_{n-1}:=\Q\Gamma_{n-1}(\Q\Gamma_{n-1}\setminus \{0\})^{-1}$. Then, we define $\Gamma_n:=(\cK_{n-1}/\cR_{n-1})\rtimes \Gamma_{n-1}$ where $\Gamma_{n-1}$ acts on $\cK_{n-1}/\cR_{n-1}$ via multiplication on the right. 

We remark that for each $n$, the (possibly noncommutative) ring $\cR_n$ is a left and right PID and $\cK_n$ is the (skew) quotient field of $\Z\Gamma_n$. To be precise, if we let $\Gamma_0=\langle t\rangle$ and $\K_n:=\Q\Gamma_n^{(1)}(\Q\Gamma_n^{(1)}\setminus \{0\})^{-1}$, the (skew) quotient field of $\Q\Gamma_n^{(1)}$, then $\cR_n\cong \K_n[t^{\pm 1}]$ and $\cK_n\cong \K_n(t)$ (see \cite[Corollary~3.3]{Cochran-Orr-Teichner:2003-1}). For instance, we have $\K_0\cong \Q$, and hence $\cR_0\cong \Q[t^{\pm 1}]$ and $\cK_0\cong \Q(t)$. 

\subsection{Representations to $\Gamma_n$}\label{subsection:representations-to-Gamma}
In this subsection, we review the work in \cite[Section~3]{Cochran-Orr-Teichner:2003-1} regarding how to construct representations to $\Gamma_n$ inductively. 

Let $K$ be a knot and $M:=M(K)$. Then we have the abelianization $\epsilon\colon \pi_1(M)\to \Z=\Gamma_0$ and the rational Alexander module $H_1(M;\cR_0)$. Define \emph{the 0th order Alexander module} $\cA_0:=H_1(M;\cR_0)$, which is in fact the rational Alexander module for $M$. We also define \emph{the 0th order Blanchfield form} to be the rational Blanchfield form  
\[
B\ell_0\colon \cA_0\times \cA_0\to \cK_n/\cR_0.
\]

 Choose an element $x_0\in \cA_0$. Note that there is a canonical projection $p_0\colon \Gamma_1\to \Gamma_0$ which maps $(a,b)\in (\cK_0/\cR_0)\rtimes \Gamma_0$ to $b\in \Gamma_0$. By \cite[Theorem~3.5]{Cochran-Orr-Teichner:2003-1} there exists a homomorphism $\phi_{x_0}\colon \pi_1(M)\to \Gamma_1$ such that 
 \[
 \phi_{x_0}(y)= (B\ell_0(x_0,y), 0)\in (\cK_0/\cR_0)\rtimes \Gamma_0=\Gamma_1
 \]
 for each $y\in \cA_0$ and $p_0\circ\phi_{x_0}=\epsilon$. Also, we obtain a coefficient system $\Z\pi_1(M)\xrightarrow{\phi_{x_0}} \Z\Gamma_1\to \cR_1$ and the corresponding \emph{first order Alexander module} $\cA_1=\cA_1(x_0):=H_1(M;\cR_1)$. Then, Cochran, Orr, and Teichner showed that $\cA_1$ is a (right) $\cR_1$-torsion module and defined \emph{the first order Blanchfield form} 
\[
B\ell_1\colon \cA_1\times \cA_1\to \cK_1/\cR_1.
\]

We iterate the above process. Suppose the homomorphism 
\[
\phi_{x_0,x_1,\ldots, x_{n-1}}\colon \pi_1(M)\to \Gamma_n
\] 
and the $n$th order Alexander module $\cA_n=\cA_n(x_0,x_1,\ldots,x_{n-1})=H_1(M;\cR_n)$ have been defined. Also suppose the $n$th order Blanchfield form 
\[
B\ell_n\colon \cA_n\times \cA_n\to \cK_n/\cR_n
\]
have been defined. Choose an element $x_n\in \cA_n$. Let $p_n\colon \Gamma_{n+1}\to \Gamma_n$ be the canonical projection. Then, by \cite[Theorem~3.5]{Cochran-Orr-Teichner:2003-1} there exists a homomorphism 
\[
\phi_{x_0,x_1,\ldots,x_n}\colon \pi_1(M)\to \Gamma_{n+1}
\] 
such that $\phi_{x_0,x_1,\ldots,x_n}(y)= (B\ell_n(x_n,y), 0)\in (\cK_n/\cR_n)\rtimes \Gamma_n=\Gamma_{n+1}$ for each $y\in \cA_n$ and $p_n\circ\phi_{x_0,x_1,\ldots, x_n}=\phi_{x_0,x_1,\ldots,x_{n-1}}$. Also, we obtain a coefficient system \[
\Z\pi_1(M)\xrightarrow{\phi_{x_0,x_1,\ldots,x_n}} \Z\Gamma_{n+1}\to \cR_{n+1},
\] 
the corresponding \emph{$(n+1)$st order Alexander module} 
\[
\cA_{n+1}=\cA_{n+1}(x_0,x_1,\ldots, x_n):=H_1(M;\cR_{n+1}),
\]
and the \emph{$(n+1)$st order Blanchfield form}
\[
B\ell_{n+1}\colon \cA_{n+1}\times \cA_{n+1}\to \cK_{n+1}/\cR_{n+1}.
\]

\subsection{Vanishing of $\rho$-invariants of order $n$}\label{subsection:vanishing-of-rho-invariant}
In this subsection, we define when a knot $K$ is said to have vanishing $\rho$-invariants of order $n$. An $\cR_n$-submodule $P$ of $\cA_n$ is said to be \emph{self-annihilating} if $P=P^\perp$ where 
\[
P^\perp:=\{y\in \cA_n\,\mid \, B\ell_n(x,y)=0\textrm{ for all }x\in P\}.
\]

The following definition is due to \cite[Theorem~4.6]{Cochran-Orr-Teichner:2003-1} (also see \cite[Definition~2.8]{Kim-Kim:2014-1}).
\begin{definition}\label{definition:vanishing-rho-invariant}
Let $n\ge 0$ be an integer. A knot $K$ has \emph{vanishing $\rho$-invariants of order $n$} if it satisfies the following conditions $(0)$--$(n)$. Let $M:=M(K)$.
\begin{itemize}
\item[($0$)] $\rho(M,\epsilon)=0$ where $\epsilon\colon \pi_1(M)\to \Gamma_0$ is the abelianization.
\item[($1$)] There exists a self-annihilating submodule $P_0$ of $\cA_0$ such that for each $x_0\in P_0$ and the corresponding representation $\phi_{x_0}\colon \pi_1(M)\to \Gamma_1$, we have $\rho(M,\phi_{x_0})=0$.

\item[($2$)] Let $x_0\in P_0$ and $\cA_1$ the corresponding first order Alexander module $\cA_1(x_0)$. Then there exists a self-annihilating submodule $P_1=P_1(x_0)$ of $\cA_1$ such that for each $x_1\in P_1$ and the corresponding representation $\phi_{x_0, x_1}\colon \pi_1(M)\to \Gamma_2$, we have $\rho(M,\phi_{x_0,x_1})=0$.

\centerline{\vdots}

\item[($n$)] Let $x_{n-2}\in P_{n-2}(x_0,x_1,\ldots,x_{n-3})$ and $\cA_{n-1}$ the corresponding $(n-1)$th order Alexander module $\cA_{n-1}(x_0,x_1,\ldots, x_{n-2})$. Then there exists a self-annihilating submodule $P_{n-1}=P_{n-1}(x_0,x_1,\ldots, x_{n-2})$ of $\cA_{n-1}$ such that for each $x_{n-1}\in P_{n-1}$ and the corresponding representation $\phi_{x_0, x_1\ldots,x_{n-1}}\colon \pi_1(M)\to \Gamma_n$, we have 
\[
\rho(M,\phi_{x_0,x_1,\ldots,x_{n-1}})=0.
\]
\end{itemize}
\end{definition}

\begin{theorem}[{\cite[Theorem~4.6]{Cochran-Orr-Teichner:2003-1}}]\label{theorem:vanishing-rho-invariant}
Let $n$ be a nonnegative integer. If a knot $K$ is $n.5$-solvable, then it has vanishing $\rho$-invariants of order $n$. In particular, if $K$ is slice, then it has vanishing $\rho$-invariants of order $n$.
\end{theorem}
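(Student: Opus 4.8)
The plan is to treat the given $n.5$-solution $W$ for $M:=M(K)$ as the single geometric object from which all of the data demanded by Definition~\ref{definition:vanishing-rho-invariant} is extracted, and to verify conditions $(0)$ through $(n)$ by induction on the level. The guiding idea is twofold: every $\rho$-invariant occurring in that definition can be realized as the $L^2$-signature defect $\bar\sigma_{\Gamma_i}(W)$ as soon as the relevant representation is shown to extend over $\pi_1(W)$, and the solvability hypothesis then forces each such defect to vanish.

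First I would build a compatible tower of coefficient systems on $W$. Writing $\pi:=\pi_1(W)$, the abelianization $\epsilon\colon\pi_1(M)\to\Gamma_0$ extends over $\pi$ because $H_1(M)\to H_1(W)$ is an isomorphism. Assuming inductively that $\phi_{x_0,\ldots,x_{j-1}}$ has been extended over $\pi$, I would set
\[
P_{j}:=\ker\bigl(\cA_{j}=H_1(M;\cR_{j})\longrightarrow H_1(W;\cR_{j})\bigr)
\]
and check two properties at each level: (a) $P_j$ is self-annihilating for $B\ell_j$, and (b) for every $x_j\in P_j$ the representation $\phi_{x_0,\ldots,x_j}$ of Subsection~\ref{subsection:representations-to-Gamma} extends over $\pi$. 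Property (a) is a ``half-lives-half-dies'' duality statement carried out over the noncommutative principal ideal domain $\cR_j$: one feeds Poincar\'e--Lefschetz duality for the pair $(W,M)$ with $\cR_j$-coefficients into the identification of $B\ell_j$ with the linking pairing on the torsion module $\cA_j$. Property (b) follows from \cite[Theorem~3.5]{Cochran-Orr-Teichner:2003-1}, since the obstruction to extending $\phi_{x_0,\ldots,x_j}$ is governed by $B\ell_j(x_j,-)$ and is killed precisely when $x_j$ dies in $H_1(W;\cR_j)$, that is, when $x_j\in P_j$.

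With the tower in place, each representation $\phi_{x_0,\ldots,x_{i-1}}\colon\pi_1(M)\to\Gamma_i$ (for $x_k\in P_k$) extends over $W$, so $\rho(M,\phi_{x_0,\ldots,x_{i-1}})=\bar\sigma_{\Gamma_i}(W)$, and it remains to show every such defect is zero. Since $\Gamma_i$ has derived length $i+1$, this representation factors through $\pi/\pi^{(i+1)}$; and an $n.5$-solution is in particular an $i.5$-solution for each $0\le i\le n$, obtained by projecting the lifts $\tilde x_k$ and the relation $\lambda_{n+1}(\tilde x_k,\tilde x_\ell)=0$ down to the $\pi^{(i+1)}$-quotient. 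The $i.5$-condition then supplies, inside $H_2(W;\NN\Gamma_i)$, a subspace spanned by the images of the $\tilde x_k$ that is isotropic for the $\NN\Gamma_i$-valued intersection form and has half the von Neumann dimension of the whole; because an $L^2$-intersection form over a finite von Neumann algebra admitting such a Lagrangian has vanishing $L^2$-signature, $\sign^{(2)}_{\Gamma_i}(W)=0$, while $\sign(W)=0$ because the ordinary rational intersection form is hyperbolic on the classes $x_k,y_k$. Hence $\bar\sigma_{\Gamma_i}(W)=0$, which is exactly condition $(i)$.

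The main obstacle is the signature step, and specifically the dimension bookkeeping behind it: one must show that the images of the $\tilde x_k$ span a subspace of exactly half the von Neumann dimension of $H_2(W;\NN\Gamma_i)$---a genuine Lagrangian---rather than an isotropic subspace of smaller dimension. This rests on the homological algebra of $\Gamma_i$ as a poly-(torsion-free-abelian) group, in particular on the embedding of $\Z\Gamma_i$ into its skew quotient field $\cK_i$, together with an Ore-localization and $L^2$-dimension comparison that pins down the von Neumann dimension of $H_2(W;\NN\Gamma_i)$ and shows the $\tilde x_k$ exhaust half of it. The self-annihilation claim (a) is the second delicate point, since the duality must be set up correctly over a noncommutative PID with involution. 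The remaining work---compatibility of the coefficient systems across levels and the reduction of every intermediate condition to the single solution $W$---is routine once these two technical cores are established.
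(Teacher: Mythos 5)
Theorem~\ref{theorem:vanishing-rho-invariant} is not proved in this paper at all---it is quoted verbatim from \cite[Theorem~4.6]{Cochran-Orr-Teichner:2003-1}---and your outline is essentially a correct reconstruction of Cochran--Orr--Teichner's own argument: the kernels $P_j=\Ker\bigl(\cA_j\to H_1(W;\cR_j)\bigr)$ are self-annihilating by their Theorem~4.4, the representations extend over the solution $W$ by their Theorem~3.6 (the correct citation for the extension step; their Theorem~3.5, which you cite, only constructs the representation on $\pi_1(M)$), and the vanishing of $\bar\sigma_{\Gamma_i}(W)$ for an $i.5$-solution is their Theorem~4.2, whose proof contains precisely the $L^2$-dimension bookkeeping you correctly single out as the technical core. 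Apart from that citation slip, your sketch matches the source proof in structure and in its identification of the two delicate points (duality over the noncommutative PID $\cR_j$ and the half-dimension count over $\NN\Gamma_i$).
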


\section{Proof of Theorem~\ref{theorem:main}}\label{section:proof-of-main-theorem}
In this section, we give a proof of Theorem~\ref{theorem:main}. Let $n\ge 0$ be an integer. Suppose two knots $K$ and $J$ have coprime Alexander polynomials and $K\# J$ is $n.5$-solvable. We will show that $K$ has vanishing $\rho$-invariants of order $n$ using induction on $n$ (see Definition~\ref{definition:vanishing-rho-invariant}).

Suppose $n=0$, that is, suppose $K\# J$ is 0.5-solvable. Then $K\# J$ is algebraically slice (see \cite[Remark~1.3]{Cochran-Orr-Teichner:2003-1}). Since $K$ and $J$ have coprime Alexander polynomials, it follows that $K$ and $J$ are algebraically slice (see \cite{Levine:1969-2}). Therefore $\int_{\omega\in S^1} \sigma_K(\omega)\,\,d\omega =0$ where $\sigma_K$ is the Levine-Tristram signature function of $K$. Since 
\[
\rho(M(K),\epsilon)= \int_{\omega\in S^1} \sigma_K(\omega)\,\,d\omega
\]
for the abelianization map $\epsilon: \pi_1(M(K))\to \Z$ by (2.4) in \cite[p.\ 108]{Cochran-Orr-Teichner:2004-1}, this implies that $K$ has vanishing $\rho$-invariants of order 0.

Suppose $n\ge 1$. Since $K$ is $n$-solvable, it is $(n-1).5$-solvable. It follows that $K$ has vanishing $\rho$-invariants of order $n-1$ by Theorem~\ref{theorem:vanishing-rho-invariant}. Therefore  $K$ satisfies the conditions (0)--$(n-1)$ in Definition~\ref{definition:vanishing-rho-invariant}, and it suffices to show that $K$ satisfies the condition $(n)$ in Definition~\ref{definition:vanishing-rho-invariant}.

Let $L:=K\# J$. Since $K$ and $L$ are $n$-solvable and $\cF_n$ is a group, $J$ is also $n$-solvable. Let $V$ be an $n$-solution for $J$ and $W$ be an $n.5$-solution for $L$. We construct a 4-manifold $X$ whose boundary is $M(K)$ as follows. Let $C$ be the standard cobordism with the top boundary $\partial_+C=M(K)\sqcup M(J)$ and bottom boundary $\partial_- C=-M(L)$ obtained by attaching a 1-handle and a 2-handle to $(M(K)\sqcup M(J))\times \{1\}\subset (M(K)\sqcup M(J))\times [0,1]$ and then turning it upside down (see \cite[p.\ 113]{Cochran-Orr-Teichner:2004-1}). To be precise, the 1-handle is attached so that the resulting manifold is connected, and the 2-handle is attached so that the meridians of $K$ and $J$ are equated in $\pi_1(C)$. Now we define 
\[
X:=W\cup_{M(L)}C\cup_{M(J)}V.
\]
Then, $X$ is a 4-manifold with $\partial X= M(K)$. One can easily see that $X$ is an $n$-solution using Mayer-Vietoris sequences. Recall that $\cR_0=\Q[t^{\pm 1}]$. Let 
\[
P_0:=\Ker\{i_*\colon H_1(M(K);\cR_0)\to H_1(X;\cR_0)\},
\]
where $i_*$ is the homomorphism induced from inclusion. Then, $P_0$ is a self-annihilating submodule of $\cA_0=H_1(M(K);\cR_0)$ (see \cite[Theorem~4.4]{Cochran-Orr-Teichner:2003-1}). 

Choose an element $x_0\in P_0$, then we obtain a homomorphism $\phi_{x_0}\colon \pi_1(M(K))\to \Gamma_1$ as explained in Subsection~\ref{subsection:representations-to-Gamma}. Since $X$ is an \emph{a fortiori} 1-solution, $\phi_{x_0}$ extends to $\pi_1(X)$ (see \cite[Theorem~3.6]{Cochran-Orr-Teichner:2003-1}). By abuse of notation, we denote restrictions of $\phi_{x_0}$ to subspaces of $X$ by $\phi_{x_0}$ as well. The following lemma is a key lemma of the proof. (Compare the proof of Lemma~\ref{lemma:trivial-on-V} with \cite[Definition~3.1]{Cha:2019-1}.)

\begin{lemma}\label{lemma:trivial-on-V}
	There exists an extension $\phi_{x_0}\colon \pi_1(X)\to \Gamma_1$ such that $\phi_{x_0}$ restricted to $V$ is trivial on $\pi_1(V)^{(1)}$ (hence $\phi_{x_0}\colon \pi_1(V)\to \Gamma_1$ is the abelianization onto $\Z$).
\end{lemma}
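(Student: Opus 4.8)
The plan is to recast the choice of extension as the choice of an $\cR_0$-module homomorphism out of $H_1(X;\cR_0)$, and then to annihilate the unwanted part using the coprimality of $\Delta_K$ and $\Delta_J$ through the primary decomposition of $\cK_0/\cR_0$. First I would set up the dictionary. Since $H_1(M(K))\to H_1(X)$ is an isomorphism onto $H_1(X)=\Z$, the homomorphism $p_0\circ\phi_{x_0}$ extends uniquely to the abelianization $\epsilon_X\colon\pi_1(X)\to\Gamma_0=\Z$, so extending $\phi_{x_0}$ over $\pi_1(X)$ amounts to choosing a crossed homomorphism $z\colon\pi_1(X)\to\cK_0/\cR_0$ (for the $\Z$-action through $\epsilon_X$) that restricts on $\pi_1(M(K))$ to the one given by $\phi_{x_0}$. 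Restricting $z$ to the commutator subgroup yields an $\cR_0$-module homomorphism $\Phi\colon H_1(X;\cR_0)\to\cK_0/\cR_0$. Writing $i\colon M(K)\hookrightarrow X$ and $j\colon V\hookrightarrow X$ for the inclusions, the boundary constraint becomes $\Phi\circ i_*=\Bl_0(x_0,-)$ (consistent precisely because $x_0\in P_0=P_0^\perp$ forces $\Bl_0(x_0,-)$ to vanish on $\Ker i_*=P_0$), and the desired conclusion $\phi_{x_0}|_{\pi_1(V)^{(1)}}=0$ is exactly $\Phi\circ j_*=0$. So it suffices to produce a module map $\Phi$ with $\Phi\circ i_*=\Bl_0(x_0,-)$ and $\Phi\circ j_*=0$.

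Two module-theoretic facts make this possible. On the one hand, $\Im i_*$ is a quotient of $\cA_0=H_1(M(K);\cR_0)$, hence annihilated by $\Delta_K$, and $\Bl_0(x_0,-)$ takes values in the $\Delta_K$-primary summand $(\cK_0/\cR_0)[\Delta_K^\infty]$, since $\Delta_K\cdot\Bl_0(x_0,y)=\Bl_0(\Delta_K x_0,y)=0$. On the other hand, because $V$ is an $n$-solution for $J$, the inclusion induces a surjection $H_1(M(J);\cR_0)\twoheadrightarrow H_1(V;\cR_0)$ (see the homology computations in the proof of \cite[Theorem~4.4]{Cochran-Orr-Teichner:2003-1}), so $H_1(V;\cR_0)$, and therefore $\Im j_*$, is annihilated by $\Delta_J$. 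As $\cR_0=\Q[t^{\pm1}]$ is a PID and $\Delta_K,\Delta_J$ are coprime, any element killed by a power of $\Delta_K$ and by $\Delta_J$ must vanish.

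With these in hand the construction is immediate. Because $X$ is (a fortiori) a $1$-solution, some extension of $\phi_{x_0}$ over $\pi_1(X)$ exists \cite[Theorem~3.6]{Cochran-Orr-Teichner:2003-1}; let $\Phi'$ be its associated module map, so $\Phi'\circ i_*=\Bl_0(x_0,-)$. Using the primary decomposition of the torsion $\cR_0$-module $\cK_0/\cR_0$, let $q$ be the projection onto $(\cK_0/\cR_0)[\Delta_K^\infty]$ and set $\Phi:=q\circ\Phi'$. Since $\Bl_0(x_0,-)$ already lands in the $\Delta_K$-primary part, $\Phi\circ i_*$ is unchanged, while $\Phi$ now takes values there; hence for $v\in\Im j_*$ the element $\Phi(v)$ is killed both by $\Delta_J$ and by a power of $\Delta_K$, so $\Phi\circ j_*=0$. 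Finally $\Phi-\Phi'$ vanishes on $\Im i_*$, so assigning it to the commutator subgroup and the value $0$ to a meridian generating $H_1(X)=\Z$ defines a crossed homomorphism $w$ with $w|_{\pi_1(M(K))}=0$; adding $w$ to the chosen extension gives an extension $\phi_{x_0}\colon\pi_1(X)\to\Gamma_1$ that still restricts to the original $\phi_{x_0}$ on $M(K)$ and satisfies $\Phi\circ j_*=0$, i.e.\ $\phi_{x_0}|_{\pi_1(V)^{(1)}}=0$. As $\pi_1(V)/\pi_1(V)^{(1)}=H_1(V)=\Z$, this restriction is then the abelianization onto $\Z$.

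The step needing the most care is the dictionary of the first paragraph: that extensions to the metabelian group $\Gamma_1$ correspond to crossed homomorphisms, that their commutator restrictions are exactly $\cR_0$-module maps $H_1(X;\cR_0)\to\cK_0/\cR_0$, and that a module map vanishing on $\Im i_*$ is realized by a modification vanishing on $\pi_1(M(K))$. The other essential input is the surjectivity $H_1(M(J);\cR_0)\twoheadrightarrow H_1(V;\cR_0)$, which is what pins down $\Im j_*$ as $\Delta_J$-torsion; once this is secured, coprimality fed through the primary decomposition of $\cK_0/\cR_0$ does the rest.
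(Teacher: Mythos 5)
Your dictionary in the first paragraph (extensions of $\phi_{x_0}$ over $\pi_1(X)$ correspond to crossed homomorphisms into $\cK_0/\cR_0$, whose restrictions to the commutator subgroup are $\cR_0$-module maps $H_1(X;\cR_0)\to\cK_0/\cR_0$, with the boundary constraint $\Phi\circ i_*=\Bl_0(x_0,-)$), and your use of the primary decomposition of the \emph{target} $\cK_0/\cR_0$, are both sound, and parallel the paper's decomposition $H_1(X;\Q[t^{\pm1}])\cong A\oplus B$ of the \emph{source}. The genuine gap is the claim that $H_1(M(J);\cR_0)\to H_1(V;\cR_0)$ is surjective because $V$ is an $n$-solution, which is what you use to conclude that $\Im j_*$ is $\Delta_J$-torsion. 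This is not what \cite[Theorem~4.4]{Cochran-Orr-Teichner:2003-1} provides: that theorem (and its proof) controls only the \emph{kernel} of $H_1(M(J);\cR_0)\to H_1(V;\cR_0)$, showing it is self-annihilating; surjectivity is false for general $n$-solutions. Concretely, let $V_0$ be any $n$-solution for $J$ and let $N$ be the closed spin $4$-manifold obtained from the exterior of the spun figure-eight knot in $S^4$ by gluing in $D^3\times S^1$; then $H_1(N)\cong\Z$, $H_2(N;\Q)=0$, and $H_1(N;\cR_0)\cong\Q[t^{\pm1}]/(t^2-3t+1)$. Forming the interior sum of $V_0$ and $N$ along circles generating $H_1$ yields a $4$-manifold $V$ that still satisfies conditions (1) and (2) of Definition~\ref{definition:F_n} (the rank of $H_2(\,\cdot\,;\Q)$ and the lagrangian-with-duals are unchanged), but $H_1(V;\cR_0)$ now contains $(t^2-3t+1)$-primary torsion regardless of $\Delta_J$. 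If $\Delta_K=(t^2-3t+1)^2$, a Mayer--Vietoris argument (the kernel of $H_1(Z;\cR_0)\oplus H_1(V;\cR_0)\to H_1(X;\cR_0)$ is $\Delta_J$-torsion, hence meets the $\Delta_K$-primary part trivially) shows this torsion survives into $\Im j_*\subset H_1(X;\cR_0)$. On such elements your projected map $\Phi=q\circ\Phi'$ takes values in $(\cK_0/\cR_0)[\Delta_K^\infty]$ with no reason to vanish, so $\Phi\circ j_*=0$ fails and your constructed extension need not kill $\pi_1(V)^{(1)}$.

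The failure is not merely in the citation but in the strategy: you fix one extension and modify it \emph{globally} by postcomposing with a projection, so its restriction to $\pi_1(V)^{(1)}$ is forced to be $\Phi\circ j_*$, and you must therefore control $\Im j_*$ --- which the solvability of $V$ does not let you do. The paper avoids ever touching $H_1(V;\cR_0)$: it modifies the representation only over $Z=W\cup C$ (via the projection onto the $\Delta_K$-primary summand $A$), checks that the result is the abelianization on $\pi_1(M(J))$ --- which uses only that the Alexander module of the closed $3$-manifold $M(J)$ is $\Delta_J$-torsion, a fact independent of $V$ --- and then invokes Seifert--van Kampen, $\pi_1(X)\cong\pi_1(Z)*_{\pi_1(M(J))}\pi_1(V)$, to \emph{redefine} the homomorphism on $\pi_1(V)$ to be $a\mapsto(0,\epsilon(a))$ outright. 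That redefinition step is exactly what your framework lacks; with it, your module-map formulation would go through (the resulting $\Phi$ vanishes on $\Im j_*$ by construction), but without it the coprimality argument cannot be completed.
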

\begin{proof}
Let $\phi_{x_0}\colon \pi_1(X)\to \Gamma_1$ be any extension of $\phi_{x_0}\colon \pi_1(M(K))\to \Gamma_1$. For brevity, let $\phi:=\phi_{x_0}$. Since $\Gamma_1^{(2)}=\{e\}$, the homomorphism $\phi$ factors through $\pi_1(X)/\pi_1(X)^{(2)}$. Note that 
\[
\pi_1(X)/\pi_1(X)^{(2)}\cong (\pi_1(X)^{(1)}/\pi_1(X)^{(2)})\rtimes \Z \cong H_1(X;\Z[t^{\pm 1}])\rtimes \Z.
\]
Moreover, since $\Gamma_1$ is a $\Q$-module, $\phi$ factors through $H_1(X;\Q[t^{\pm 1}])\rtimes \Z$, that is, $\phi=\phi'\circ q$ for some homomorphisms $q\colon \pi_1(X)\to H_1(X;\Q[t^{\pm 1}])\rtimes \Z$ and $\phi'\colon  H_1(X;\Q[t^{\pm 1}])\rtimes \Z\to \Gamma_1$. 

We can decompose $H_1(X;\Q[t^{\pm 1}])\cong A\oplus B$ where 
\[
A\cong \bigoplus_{p(t)}\left(p(t)\textrm{-primary part of }H_1(X;\Q[t^{\pm 1}])\right)
\]
where $p(t)$ runs over irreducible factors of $\Delta_K(t)$. Let $p_A\colon  H_1(X;\Q[t^{\pm 1}])\to A$ be the projection homomorphism. 

Now let $Z:=W\cup C$ and $j_*\colon \pi_1(Z)\to \pi_1(X)$ be the homomorphism induced from inclusion. Define $\psi\colon \pi_1(Z)\to \Gamma_1$ to be the composition
\[
\pi_1(Z)\xrightarrow{j_*}\pi_1(X)\xrightarrow{q}  H_1(X;\Q[t^{\pm 1}])\rtimes \Z \xrightarrow{(p_A,id)} A\rtimes \Z \xrightarrow{\phi'}\Gamma_1.
\]
Then, for $\partial Z=M(K)\sqcup M(J)$, since the image of $\pi_1(M(K))$ under $q$ is contained in $A\rtimes \Z$, the map $\psi$ is equal to $\phi$ on $\pi_1(M(K))$, hence an extension of $\phi=\phi_{x_0}\colon \pi_1(M(K))\to \Gamma_1$.  Also, $\psi$ is equal to the abelianization on $\pi_1(M(J))$.  For, the image of $\pi_1(M(J))$ under $q$ is contained in $B$ since every element of $H_1(M(J);\Q[t^{\pm 1}])$ is an $\Delta_J(t)$-torsion element and $\Delta_J(t)$ is coprime to $\Delta_K(t)$.

We extend $\psi$ to $\pi_1(X)$. Note that  by Seifert-van Kampen Theorem, $\pi_1(X)\cong \pi_1(Z)* \pi_1(V)$ with amalgamation of $\pi_1(M(J))$. Note that since $V$ is an $n$-solution for $J$, we have $H_1(M(J))\cong H_1(V)$. Since $\psi$ on $\pi_1(M(J))$ is the abelianization, if we define $\psi\colon \pi_1(V)\to \Gamma_1$ by $\psi(a)=(0,\epsilon(a))\in \Q(t)/\Q[t^{\pm 1}]\rtimes \Z$ where $\epsilon\colon \pi_1(V)\to \Z$ is the abelianization, then we obtain a well-defined extension $\psi\colon \pi_1(X)\to \Gamma_1$, which satisfies the desired conditions. 
\end{proof}

By Lemma~\ref{lemma:trivial-on-V}, we may assume that $\phi_{x_0}$ is trivial on $\pi_1(V)^{(1)}$ with any choice of $x_0\in P_0$. Since $K$ is $n$-solvable via $X$, by \cite[Theorem~4.6]{Cochran-Orr-Teichner:2003-1}, the knot $K$ satisfies the conditions $(n)$ in Definition~\ref{definition:vanishing-rho-invariant}, possibly only except that $\rho(M(K),\phi_{x_0,x_1,\ldots,x_{n-1}})=0$ for $\phi_{x_0,x_1,\ldots,x_{n-1}}\colon \pi_1(M)\to \Gamma_n$.

For brevity, we let $\phi:=\phi_{x_0,x_1,\ldots,x_{n-1}}$. Now it suffices to show that $\rho(M(K),\phi)=0$.  As we have stated in Subsection~\ref{subsection:rho-invariant},
\[
\rho(M(K),\phi)=\bar{\sigma}_{\Gamma_n}(X)=\sign_G^{(2)}(X)-\sign(X).
\]
By Novikov additivity, 
\[
\bar{\sigma}_{\Gamma_n}(X)= \bar{\sigma}_{\Gamma_n}(W) + \bar{\sigma}_{\Gamma_n}(C) + \bar{\sigma}_{\Gamma_n}(V).
\]
We will show that each of the terms on the right hand side equals zero, which will complete the proof.

Firstly, $\bar{\sigma}_{\Gamma_n}(W)=0$. For, $W$ is an $n.5$-solution and $\Gamma_n^{(n+1)}=\{e\}$ (see \cite[Theorem~4.2]{Cochran-Orr-Teichner:2003-1}). Secondly, $ \bar{\sigma}_{\Gamma_n}(C)=0$ since $\sign(C)=\sign_{\Gamma_n}^{(2)}(C)=0$: one can see $\sign(C)=0$ since $H_2(\partial C)\to H_2(C)$ is surjective. Also, $\sign_{\Gamma_n}^{(2)}(C)=0$ due to the proof of \cite[Lemma~4.2]{Cochran-Orr-Teichner:2004-1}.

Finally, it only remains to show $\bar{\sigma}_{\Gamma_n}(V)=0$. Since $V$ is an $n$-solution and $n\ge 1$, we have $\sign(V)=0$. We show that $\sign_{\Gamma_n}^{(2)}(V)=0$. Since $V$ is an $n$-solution, it suffices to show that $\phi\colon \pi_1(V)\to \Gamma_n$ factors through $\pi_1(V)/\pi_1(V)^{(n)}$ due to \cite[Theorem~4.2]{Cochran-Orr-Teichner:2003-1} and its proof. For, if $\phi$ on $\pi_1(V)$ factors through $\pi_1(V)/\pi_1(V)^{(n)}$, then $H_2(V;\cK_n)$ have a half-rank summand on which the intersection form with $\cK_n$ coefficients vanishes, and this will imply  that $\bar{\sigma}_{\Gamma_n}(V)=0$. (Recall that $\cK_n$ is the skew quotient field of $\Z\Gamma_n$.)

Now we show that $\phi\colon \pi_1(V)\to \Gamma_n$ factors through $\pi_1(V)/\pi_1(V)^{(n)}$. Recall that there are canonical projections $p_{i-1}\colon\Gamma_i=(\cK_{i-1}/\cR_{i-1})\rtimes \Gamma_{i-1} \to \Gamma_{i-1}$. Composing the maps $p_i$ for $1\le i\le n-1$, we obtain a canonical projection $p\colon \Gamma_n\to \Gamma_1$ such that  $p\circ \phi = \phi_{x_0}$. By Lemma~\ref{lemma:trivial-on-V}, the map $\phi_{x_0}\colon \pi_1(X)\to \Gamma_1$ maps $\pi_1(V)^{(1)}$ to $\{e\}$. Therefore, 
\[
\Im\{\pi_1(V)^{(1)}\xrightarrow{\phi} \Gamma_n \xrightarrow{p} \Gamma_1\} = \{e\}.
\] 
Therefore, letting $G$ be a subgroup of $\Gamma_n$ such that 
\[
G:= \cK_{n-1}/\cR_{n-1}\rtimes \left(\cK_{n-2}/\cR_{n-2} \rtimes \left(\cdots \left(\cK_1/\cR_1\rtimes \{0\})\rtimes \{0\})\right)\cdots\right)\right),
\]
we have
\[
\Im\{\pi_1(V)^{(1)}\xrightarrow{\phi} \Gamma_n\} \subset G.
\]
Note that if $H$ is a group such that $H=H_1\rtimes H_2 $, then $H^{(1)}\subset H_1\rtimes H_2^{(1)}$. Using this iteratively and noting that $\cK_i/\cR_i$ are abelian groups for all $i$, one can see that $G^{(n-1)}=\{e\}$. Therefore,
\[
\Im\{\pi_1(V)^{(n)}\xrightarrow{\phi}\Gamma_n\}= \Im\{(\pi_1(V)^{(1)})^{(n-1)}\xrightarrow{\phi}\Gamma_n\} \subset G^{(n-1)}=\{e\},
\]
and hence $\phi$ factors through $\pi_1(V)/\pi_1(V)^{(n)}$. This completes the proof of Theorem~\ref{theorem:main}.


\begin{thebibliography}{COT04}

\bibitem[CG85]{Cheeger-Gromov:1985-1}
Jeff Cheeger and Mikhail Gromov, \emph{Bounds on the von {N}eumann dimension of {$L\sp
  2$}-cohomology and the {G}auss-{B}onnet theorem for open manifolds}, J.
  Differential Geom. \textbf{21} (1985), no.~1, 1--34. 

\bibitem[Cha13]{Cha:2010-01}
Jae~Choon Cha, \emph{Amenable ${L}^2$-theoretic methods and knot concordance},
  Int. Math. Res. Not. IMRN (2013), no.~15, 1--36.

\bibitem[Cha19]{Cha:2019-1}
Jae~Choon Cha, \emph{Primary decomposition in the concordance group of topologically
  slice knots}, arXiv:1910.14629, 2019.

\bibitem[CO12]{Cha-Orr:2009-01}
Jae~Choon Cha and Kent~E. Orr, \emph{${L}^2$-signatures, homology localization, and
  amenable groups}, Comm. Pure Appl. Math. \textbf{65} (2012), 790--832.

\bibitem[COT03]{Cochran-Orr-Teichner:2003-1}
Tim~D. Cochran, Kent~E. Orr, and Peter Teichner, \emph{Knot concordance,
  {W}hitney towers and {$L\sp 2$}-signatures}, Ann. of Math. (2) \textbf{157}
  (2003), no.~2, 433--519. 

\bibitem[COT04]{Cochran-Orr-Teichner:2004-1}
Tim~D. Cochran, Kent~E. Orr, and Peter Teichner, \emph{Structure in the classical knot concordance group}, Comment.
  Math. Helv. \textbf{79} (2004), no.~1, 105--123. 

\bibitem[CT07]{Cochran-Teichner:2003-1}
Tim~D. Cochran and Peter Teichner, \emph{Knot concordance and von {N}eumann
  {$\rho$}-invariants}, Duke Math. J. \textbf{137} (2007), no.~2, 337--379.

\bibitem[FQ90]{Freedman-Quinn:1990-1}
Michael~H. Freedman and Frank Quinn, \emph{Topology of 4-manifolds}, Princeton
  Mathematical Series, vol.~39, Princeton University Press, Princeton, NJ,
  1990. 

\bibitem[Fre82]{Freedman:1982-2}
Michael~H. Freedman, \emph{A surgery sequence in dimension four;\ the relations with
  knot concordance}, Invent. Math. \textbf{68} (1982), no.~2, 195--226.

\bibitem[Kim05]{Kim:2005-2}
Se-Goo Kim, \emph{Polynomial splittings of {C}asson-{G}ordon invariants}, Math.
  Proc. Cambridge Philos. Soc. \textbf{138} (2005), no.~1, 59--78. 

\bibitem[KK08]{Kim-Kim:2008-1}
Se-Goo Kim and Taehee Kim, \emph{Polynomial splittings of metabelian von
  {N}eumann rho-invariants of knots}, Proc. Amer. Math. Soc. \textbf{136}
  (2008), no.~11, 4079--4087. 

\bibitem[KK14]{Kim-Kim:2014-1}
Se-Goo Kim and Taehee Kim, \emph{Splittings of von {N}eumann rho-invariants of knots}, J. Lond.
  Math. Soc. (2) \textbf{89} (2014), no.~3, 797--816. 

\bibitem[Lev69a]{Levine:1969-2}
Jerome P. Levine, \emph{Invariants of knot cobordism}, Invent. Math. 8 (1969),
  98--110; addendum, ibid. \textbf{8} (1969), 355. 

\bibitem[Lev69b]{Levine:1969-1}
Jerome P. Levine, \emph{Knot cobordism groups in codimension two}, Comment. Math. Helv.
  \textbf{44} (1969), 229--244. 

\end{thebibliography}
\providecommand{\bysame}{\leavevmode\hbox to3em{\hrulefill}\thinspace}
\providecommand{\MR}{\relax\ifhmode\unskip\space\fi MR }
\providecommand{\MRhref}[2]{%
  \href{http://www.ams.org/mathscinet-getitem?mr=#1}{#2}
}
\providecommand{\href}[2]{#2}

\end{document}